\documentclass[12pt,reqno]{amsart}


\headheight=6.15pt \textheight=8.75in \textwidth=6.5in
\oddsidemargin=0in \evensidemargin=0in \topmargin=0in

\usepackage{latexsym}
\usepackage{amssymb}
\usepackage{graphics}
\usepackage{graphicx}

\renewcommand{\epsilon}{\varepsilon}

\newcommand{\PP}{{\mathbb P}}

\newcommand{\R}{{\mathbb R}}
\newcommand{\C}{{\mathbb C}}

\newcommand{\CP}{\C\PP}
\renewcommand{\d}{\partial}

\newcommand{\SU}{{\operatorname{SU}}}

\renewcommand{\phi}{\varphi}

\newcommand{\ocal}{\mathcal{O}}

\newtheorem{theo}{{Theorem}}[section]
\newtheorem{cor}[theo]{{Corollary}}
\newtheorem{lem}[theo]{{Lemma}}
\newtheorem{prop}[theo]{{Proposition}}

\newenvironment{rem}{\medskip\noindent{\it Remark:\/} }{\medskip}

\newenvironment{claim}{\medskip\noindent{\it Claim:\/} }{\medskip}

\title[Critical Points of holomorphic sections]{Critical Points of holomorphic sections of line bundles and a spherical Gauss-Lucas theorem }

\author{Jingzhou Sun }
\address{Department of Mathematics, Stony Brook University, Stony Brook, NY
11777, USA} \email{jsun@math.stonybrook.edu}


\date{\today}

\begin{document}

\begin{abstract}
We study critical points of holomorphic sections of $\ocal(m)$ on $\CP^n$. For quadrics, we give a complete discription of their critical points. When $n=1$, we prove a spherical Gauss-Lucas theorem. For general situation, we prove that a general section has all its critical points isolated and non-degenerate.
\end{abstract}

\maketitle

 \tableofcontents
 \section{Introduction}
The critical points of a section $s$ of a line bundle over a complex manifold is the points where $\nabla s$ vanish, where $\nabla$ is a Hermitian connection.
The study of distributions of critical points of holomorphic sections of an ample line bundle over a K\"ahler manifold was initiated by Douglas,  Shiffman, and Zelditch in \cite{dsz1}\cite{dsz2}, one of whose motivations came from Physics. They proved general formulas for expected density of critical points of holomorphic sections of a line bundle which is not necessarily ample. In the case of $\ocal(m)$ over $\CP^n$, they were able to use the formulas they proved, with the aid of Maple, to calculate out the explicit densities of each index for $m\leq 6$. Based on their pioneering work, a further consideration is to have a better understanding of the distribution of critical points, which should be more precise than the expected density.  This consideration is one motivation of the current work.

A second motivation of this work is from \cite{s}, where the author proved a formula for expected Euler characteristic of excursion sets of random sections of an ample line bundle. The formula proved there is only valid for big enough cutting values. When trying to understand the more general situation, one has to encounter the problem of understanding the more general critical points of random sections.

An important application, which is also a motivation, of the study critical points of holomorphic sections is  in Morse theory. Because the critical points of the norm function $|s|^2$ is the union of the critical points of $s$ with the zero variety of $s$, understanding the critical points of $s$ is the same thing as understanding those of the function $|s|^2$.  In \cite{ab}, Atiyah and Bott extended the Morse theory with non-degenerate critical points replaced by non-degenerate critical manifolds, which fits exactly when one studies critical points of holomorphic sections of a line bundle, since the zero variety is in general a non-degenerate critical manifold  of the norm function $|s|^2$ of a section $s\in \textrm{H}^0(M,L)$(cf.\cite{bo}). This generalized Morse theory together with theorem \ref{theo:nondeg} provide a way to compute the middle cohomology class of a hypersurface in $\CP^{2n+1}$. 

For example, when $s$ is a general section of $\textrm{H}^0(\CP^n, \ocal(2))$, we are able to have a complete description of its critical points.
\begin{theo}\label{theo:quadric}
Let $D\subset\CP^n$ be a smooth quadratic hypersurface. Then by an action of $\SU(n+1)$, we can assume $Q$ is defined as the zero set of $s=\sum_{i=0}^n a_i Z_i^2$, with $a_i$ all positive and satisfying $a_1<a_2<\cdots<a_n$. We denote by $p_i$ the point whose only nonvanishing homogeneous coordinate is $Z_i$. 
Then the critical points of $s$ are $\{p_i|0\leq i\leq n\}$, which are all non-degenerate. Furthermore the index of $|s|^2$ at $p_i$ is $n+i$.
\end{theo}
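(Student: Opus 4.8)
The plan is to reduce the whole statement to one explicit polynomial computation, via the standard description of critical points of a section of $\ocal(m)$ in homogeneous coordinates. First I would record the criterion: give $\ocal(m)$ the Fubini--Study metric (so, in the chart $Z_0\neq 0$ with $w_j=Z_j/Z_0$ and $\phi=\log(1+|w|^2)$, the monomial frame $Z_0^m$ has norm $e^{-m\phi}$) and its Chern connection; then $[Z]\in\CP^n$ is a critical point of $s\in H^0(\CP^n,\ocal(m))$ iff the gradient $\bigl(\frac{\d s}{\d Z_0}(Z),\dots,\frac{\d s}{\d Z_n}(Z)\bigr)$ is a complex multiple of $\overline Z=(\overline{Z_0},\dots,\overline{Z_n})$, the multiple being forced to be $m\,s(Z)/\|Z\|^2$. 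Indeed $\nabla^{0,1}s=\dbar\tilde s=0$ automatically, while $\nabla^{1,0}s=0$ reads $\frac{\d\tilde s}{\d w_j}=m\bar w_j(1+|w|^2)^{-1}\tilde s$, which is the $j\geq 1$ part of the claimed proportionality, and the $j=0$ part follows from Euler's relation $\sum_i Z_i\,\d_i s=m\,s$. Since $D$ is smooth, no critical point of $s$ lies on $D=\{s=0\}$, so one may freely divide by $s(Z)$; and these are exactly the critical points of $|s|^2$ away from $D$.

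Applying this to $s=\sum_{i=0}^n a_iZ_i^2$ with $m=2$, the gradient is $(2a_0Z_0,\dots,2a_nZ_n)$, so the critical equation is $a_jZ_j=\tfrac{\lambda}{2}\overline{Z_j}$ for all $j$, with $\lambda=2s(Z)/\|Z\|^2$. For any index $j$ with $Z_j\neq 0$ this forces $a_j=|\lambda|/2$, a common value; as the $a_j$ are pairwise distinct, at most one coordinate is nonzero, so $[Z]=p_i$ for some $i$, and conversely each $p_i$ solves the equation (with $\lambda=2a_iZ_i/\overline{Z_i}$, consistent with $2s(p_i)/\|p_i\|^2$). This already gives that the critical set of $s$ is exactly $\{p_0,\dots,p_n\}$. (I would also note here that after the $\SU(n+1)$-action one may take the $a_i$ positive and, for a general quadric, strictly ordered $a_0<a_1<\cdots<a_n$: diagonalize the symmetric matrix of the quadric by the Autonne--Takagi factorization, reorder by a permutation, and absorb a phase to land in $\SU(n+1)$; rescaling $s$ affects neither the critical points nor the indices.)

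For the indices, I would compute in the affine chart $Z_i\neq 0$ with coordinates $w_k=Z_k/Z_i$ ($k\neq i$), so $p_i$ is the origin and
\[
F(w)=|s|^2_h(w)=\frac{\bigl|a_i+\sum_{k\neq i}a_kw_k^2\bigr|^2}{\bigl(1+\sum_{k\neq i}|w_k|^2\bigr)^2}.
\]
Because the holomorphic part $u=\sum_{k\neq i}a_kw_k^2$ and $v=\sum_{k\neq i}|w_k|^2$ each vanish to order two at $0$ (so $u\bar u,uv,v^2$ vanish to order four), the order-two Taylor polynomial of $\log F$ at $0$ is $2\log a_i+\sum_{k\neq i}\bigl[\tfrac{a_k}{a_i}(w_k^2+\overline{w_k}^2)-2|w_k|^2\bigr]$, which in real coordinates $w_k=x_k+iy_k$ is $2\log a_i+\sum_{k\neq i}\tfrac{2}{a_i}\bigl[(a_k-a_i)x_k^2-(a_k+a_i)y_k^2\bigr]$. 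This quadratic form is diagonal with all eigenvalues nonzero (as $a_k\neq a_i$), so $p_i$ is a non-degenerate critical point — the Hessians of $F$ and of $\log F$ at $p_i$ differ by the positive factor $F(p_i)=a_i^2$, hence have the same signature. Each $y_k$ gives a negative direction, and $x_k$ gives one precisely when $a_k<a_i$; so the Morse index of $|s|^2$ at $p_i$ is $n+\#\{k:a_k<a_i\}=n+i$ under the ordering $a_0<\cdots<a_n$.

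The genuinely delicate points are the convention bookkeeping in the first paragraph — pinning down the metric and connection on $\ocal(m)$ so the proportionality criterion comes out with the correct constant — and, in the last paragraph, verifying that all mixed second-order terms (between distinct $w_k$, and between $w$ and $\bar w$) really are absent, which is exactly what makes the Hessian diagonalize so transparently. Both are routine once set up carefully; the substantive mechanism is just that distinctness of the $a_i$ collapses the critical set onto the coordinate points and simultaneously decouples the Hessian coordinate by coordinate.
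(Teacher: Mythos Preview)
Your argument is correct. It differs from the paper's in two places worth noting. For locating the critical points, the paper works in the affine chart $U_0$, writes the system $a_iz_i(1+|z|^2)=f_s\bar z_i$, and runs an induction on $n$ (if some $z_i=0$, reduce; otherwise take quotients to get $\tfrac{a_i}{a_j}\tfrac{z_i}{z_j}=\tfrac{\bar z_i}{\bar z_j}$, forcing $a_i=a_j$). Your homogeneous formulation $a_jZ_j=\tfrac{\lambda}{2}\overline{Z_j}$ gives the same conclusion in one stroke and handles all charts at once. For the index, the paper feeds the quadratic data into its block-matrix machinery from Section~\ref{sec:basic}: with $A=\diag(2ib_1,\dots,2ib_n)$, $b_k=a_k/a_0>1$, the conjugated Hessian is $\diag(-2I+\tfrac12A\bar A,\,-2I)$, whose lower block contributes $n$ negative directions and whose upper block is positive definite. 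Your direct Taylor expansion of $\log|s|^2_h$ at $p_i$, yielding $\sum_{k\neq i}\tfrac{2}{a_i}\bigl[(a_k-a_i)x_k^2-(a_k+a_i)y_k^2\bigr]$, is more elementary and makes the dependence of the index on the position of $a_i$ in the ordering completely transparent; the paper's matrix route is really set up for reuse in Theorem~\ref{theo:nondeg}. Both approaches are equivalent in substance—the key point in each is that distinctness of the $a_i$ simultaneously isolates the critical points and makes the Hessian nonsingular and diagonal.
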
   

A direct corollary is the following well known result.
\begin{cor}\label{cor}
When $n$ is odd, the Poincare polynomial of $D$ is $\sum_{i=0}^{n-1}t^{2i}+t^{n-1}$
\end{cor}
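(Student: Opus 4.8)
The plan is to deduce the corollary from Theorem~\ref{theo:quadric} by Morse--Bott theory applied to $f=|s|^2$, computed with respect to the Fubini--Study metric on $\ocal(2)$, where $s=\sum_{i=0}^n a_iZ_i^2$ is the section furnished by Theorem~\ref{theo:quadric}. (The case $n=1$, where $D$ is two points, is immediate, so assume $n\ge 3$.) By that theorem, together with the fact that the zero locus $D=\{s=0\}$ is a nondegenerate critical manifold of $|s|^2$ of index $0$ (cf.~\cite{bo}), $f$ is a Morse--Bott function on $\CP^n$ whose critical set is the disjoint union $D\sqcup\{p_0,\dots,p_n\}$, where $p_i$ is a nondegenerate critical point of index $n+i$. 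Every negative normal bundle is orientable --- for $D$ it is the complex line bundle $\ocal(2)|_D$, and the $p_i$ are points --- so the Morse--Bott inequalities over $\Q$ yield a polynomial identity
\begin{equation}\label{eq:mbineq}
P_t(D)+\sum_{i=0}^n t^{\,n+i}=P_t(\CP^n)+(1+t)R(t)=\sum_{i=0}^n t^{\,2i}+(1+t)R(t),\qquad R\in\Z_{\ge 0}[t].
\end{equation}
(The critical values of the $p_i$ need not be distinct or ordered, but that is irrelevant here.) Note that $f$ is \emph{not} a perfect Morse--Bott function --- one checks $R\neq 0$ already for $n=3$ --- so \eqref{eq:mbineq} alone does not determine $P_t(D)$; the missing input will be elementary facts about $D$ together with a divisibility observation.

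Next I would pin down all but one of the Betti numbers of $D$. Since $D$ is a smooth ample divisor in $\CP^n$ of complex dimension $n-1$, the Lefschetz hyperplane theorem gives $b_k(D)=b_k(\CP^n)$ for $k\le n-2$ and an injection $H^{n-1}(\CP^n)\hookrightarrow H^{n-1}(D)$; in particular $D$ is connected. Combined with Poincaré duality on the closed oriented $(2n-2)$--manifold $D$, this forces $b_k(D)=b_k(\CP^n)$ for \emph{every} $k\neq n-1$, that is, $b_k(D)=1$ for $k$ even and $b_k(D)=0$ for $k$ odd. As $n$ is odd, $n-1$ is even, so $b_{n-1}(\CP^n)=1$ and the Lefschetz injection gives $b_{n-1}(D)=1+c$ with $c\in\Z_{\ge 0}$. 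Hence
\begin{equation}\label{eq:PDform}
P_t(D)=\sum_{j=0}^{n-1}t^{\,2j}+c\,t^{\,n-1},
\end{equation}
and it remains only to show $c=1$.

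Finally, substitute \eqref{eq:PDform} into \eqref{eq:mbineq}. The even-power sums telescope to $P_t(D)-P_t(\CP^n)=c\,t^{\,n-1}-t^{\,2n}$, so \eqref{eq:mbineq} becomes $c\,t^{\,n-1}+\bigl(t^{\,n}+t^{\,n+1}+\cdots+t^{\,2n-1}\bigr)=(1+t)R(t)$. The left-hand side is therefore divisible by $1+t$; evaluating at $t=-1$ and using that $n$ is odd (so $(-1)^{n-1}=1$ while $\sum_{j=n}^{2n-1}(-1)^j=-1$) gives $c-1=0$, i.e.\ $c=1$. This yields $P_t(D)=\sum_{i=0}^{n-1}t^{2i}+t^{n-1}$, as asserted. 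The only genuine subtlety is verifying that the Morse--Bott machinery applies with the stated data --- that $|s|^2$ is Morse--Bott with these critical manifolds and indices (Theorem~\ref{theo:quadric} and \cite{bo}) and that the negative bundles are $\Q$-orientable; once that is in hand the corollary is pure bookkeeping, the essential point being that $1+t$ fails to divide $t^n+\cdots+t^{2n-1}$ precisely because $n$ is odd, which is what forces $c=1$.
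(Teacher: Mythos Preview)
Your proof is correct and follows essentially the same route as the paper: apply the Atiyah--Bott Morse inequalities to $|s|^2$ using Theorem~\ref{theo:quadric} and the fact that $D$ is a nondegenerate critical manifold of index $0$, reduce via the Lefschetz hyperplane theorem and Poincar\'e duality to the single unknown middle Betti number, and then evaluate the resulting identity at $t=-1$ to pin it down. Your write-up is somewhat more careful than the paper's (you track orientability of the negative bundles and the Lefschetz injection explicitly), but the argument is the same.
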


The distribution of critical points of holomorphic sections on the Riemann sphere itself is interesting enough. Recall that the well-known Gauss-Lucus theorem on the complex plane states that the critical points of a polynomial is contained in contained in the convex hull generated by its zeroes. When one consider the direct generalization to $\CP^1$, polynomials are replaced by holomorphic sections of $\ocal(m)$, ordinary critical points replaced by critical points under the Fubini-Study connection. This is of course much more complicated. For example one can not take the convex hull of the zeroes arbitrarily, because it may be the whole $\CP^1$, in which case there is nothing to prove. By some restrictions on the locations of the zeroes,  we can prove a spherical Gauss-Lucas theorem
\begin{theo}\label{theo:gauss}
Let $s\in H^0(\CP^1, \ocal(n))$ be a nonzero section for $n\geq 2$. And suppose that the zeroes of $s$ is contained in an open hemisphere under the Fubini-Study metric. Let $P$ denote the convex hull spanned by the zeroes, and let $P_{\infty}$ denote the opposite of $P$. Then the union of the  two convex hull contains all the critical points of $s$. In other words, let $C_s$ ($C_{s,P}$, $C_{s,P_{\infty}})$ respectively) denote the set of critical points of $s$(contained in $P$ and in $P_{\infty}$ respectively), then $C_s=C_{s,P}\cup C_{s,P_{\infty}}$. Moreover, $C_{s,P_{\infty}}$ is never nonempty and if $P$ is not a point then  $C_{s,P_{\infty}}$ is  contained in the interior of  $P_{\infty}$, also $C_{s,P_{\infty}}$ contains at least one critical point of index 2 if non-degenerate. And if all the critical points of $|s|^2$ are non-degenerate, then $C_{s,P}$  is also nonempty and contained in the interior of $P$.
\end{theo}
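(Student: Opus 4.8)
The plan is to transfer everything to the round sphere $S^2\cong\CP^1$ and recognize $C_s$ as the equilibrium set of $n$ unit charges on $S^2$ interacting through the logarithmic potential. After acting by an element of $\SU(2)$ I may assume the hemisphere containing the zeros is $\{|z|<1\}$ in the affine coordinate, so the zeros $z_1,\dots,z_n$ (listed with multiplicity) satisfy $|z_j|<1$ and $s$ is given by a polynomial of exact degree $n$. Fix the isometry $\Psi\colon\CP^1\to S^2\subset\R^3$ under which $\{|z|<1\}$ is the open upper hemisphere $\{x_3>0\}$ and the antipodal map is $z\mapsto-1/\bar z$; write $\zeta=\Psi(z)$ and $\zeta_j=\Psi(z_j)$. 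Using the chordal identity $|\Psi(z)-\Psi(w)|^2=\frac{4|z-w|^2}{(1+|z|^2)(1+|w|^2)}$ one checks that on $S^2\setminus\{\zeta_1,\dots,\zeta_n\}$ we have $\log|s|^2=c_0+\sum_j\log|\zeta-\zeta_j|^2$ for a constant $c_0$ (this holds globally, including at $z=\infty$, since $s$ has exact degree $n$). Hence $C_s$ is exactly the critical set of $U(\zeta)\eqd\sum_j\log|\zeta-\zeta_j|$ on $S^2\setminus\{\zeta_j\}$, and every $\zeta_j$ lies in the open upper hemisphere.

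For the inclusion $C_s\subseteq P\cup P_\infty$ I would mimic the classical Gauss--Lucas argument. The constrained critical point equation says the Euclidean gradient $\sum_j v_j(\zeta-\zeta_j)$, where $v_j\eqd|\zeta-\zeta_j|^{-2}>0$, is a real multiple of $\zeta$; solving gives $B\zeta=\sum_j v_j\zeta_j$ with $B\in\R$. Since every $\zeta_j$ has positive third coordinate, $\sum_j v_j\zeta_j\ne 0$, so $B\ne 0$ and $\zeta$ is a nonzero real multiple of a point $q$ in the Euclidean convex hull $Q$ of $\{\zeta_j\}$; as $|\zeta|=1$ this forces $\zeta=\pm q/|q|$. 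The geometric input is that the radial projection $x\mapsto x/|x|$ carries $Q$ onto the geodesic convex hull $P$ of $\{\zeta_j\}$, which is standard for points in an open hemisphere and follows by checking that the radial image of $Q$ is geodesically convex and, conversely, that the geodesic arc between the radial projections of two points of $Q$ is the radial image of the segment joining them. Therefore $\zeta\in P\cup(-P)=P\cup P_\infty$; and since $P$ lies in the open upper and $P_\infty$ in the open lower hemisphere, $P\cap P_\infty=\emptyset$, so $C_s=C_{s,P}\sqcup C_{s,P_\infty}$.

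The finer statements hinge on one identity: if $m$ is a unit vector and $\zeta\in S^2$ lies on the great circle $\{\langle x,m\rangle=0\}$, then $\langle\nabla_{S^2}U(\zeta),m\rangle=-\sum_j v_j\langle\zeta_j,m\rangle$. Taking $m=e_3=(0,0,1)$ gives $\langle\nabla_{S^2}U,e_3\rangle<0$ at every point of the equator, so there $\nabla|s|^2$ points into the closed lower hemisphere $\bar H_{-}$; hence no equator point is a local maximum of $|s|^2$ restricted to $\bar H_{-}$, and $|s|^2$ attains its maximum over $\bar H_{-}$ at an interior point $p$. Then $p$ is a critical point of $|s|^2$ in the open lower hemisphere and is not a zero, so $p\in C_s\subseteq P\cup P_\infty$, which forces $p\in P_\infty$; thus $C_{s,P_\infty}\ne\emptyset$, and under the non-degeneracy hypothesis $p$ is a non-degenerate local maximum, i.e.\ of index $2$. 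For interiority, at a boundary point $\zeta$ of $P$ (resp.\ of $P_\infty$) I pick a supporting great circle $\{\langle x,m'\rangle=0\}$ with $P$ (resp.\ $P_\infty$) contained in $\{\langle x,m'\rangle\ge 0\}$ and with not every $\zeta_j$ on it --- such a choice exists once $P$ is not a point, using that a vertex of $P$ has a full normal cone and that in the collinear case the endpoints of the arc still admit such a circle --- and the displayed identity then gives $\langle\nabla_{S^2}U(\zeta),m'\rangle$ a strict sign, hence $\nabla_{S^2}U(\zeta)\ne 0$. So $C_{s,P}$ and $C_{s,P_\infty}$ meet no boundary points (reading ``interior'' as relative interior in the collinear case).

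Finally, the non-emptiness of $C_{s,P}$ under the non-degeneracy assumption I would extract from Morse theory on the closed upper hemisphere $\bar H_{+}\cong D^2$. Non-degeneracy forces the $n$ zeros to be simple, i.e.\ distinct index-$0$ critical points of $|s|^2$ lying in the open upper hemisphere, and makes $|s|^2$ a Morse function with no critical points on the equator. By the sign computation $\nabla|s|^2$ points outward along $\partial\bar H_{+}$, so $\chi(\bar H_{+})=\sum_p(-1)^{\operatorname{ind}(p)}$ summed over the critical points in the interior, which are the $n$ zeros together with $C_{s,P}$ (the latter lying in the open upper hemisphere by the interiority just proved). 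Hence $\sum_{p\in C_{s,P}}(-1)^{\operatorname{ind}(p)}=\chi(\bar H_{+})-n=1-n\le -1$, so $C_{s,P}$ contains a critical point of odd index, which on a surface means index $1$; in particular $C_{s,P}\ne\emptyset$. The analogous bookkeeping on $\bar H_{-}$ is consistent with, and re-proves, the index-$2$ point in $C_{s,P_\infty}$. I expect the genuine obstacle to be organizing the degenerate configurations --- when $P$ is a point and when the $\zeta_j$ are geodesically collinear --- and securing the supporting-great-circle choice there; once the reformulation $\log|s|^2=c_0+\sum_j\log|\zeta-\zeta_j|^2$ and the equatorial sign $\langle\nabla_{S^2}U,e_3\rangle<0$ are in hand, the rest is careful bookkeeping.
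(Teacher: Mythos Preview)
Your proof is correct and takes a genuinely different route from the paper. The paper works intrinsically on $\CP^1$: it first proves a combinatorial lemma characterizing membership in $P\cup P_\infty$ by the convex cone $K_P(q)$ of geodesic rays from $q$ to the zeros, and then, after moving a given critical point to $[1,0]$ by $\SU(2)$, uses the vanishing of the linear coefficient of $f_s$ (equivalently $\sum_i 1/x_i=0$) to force the cone condition. For the finer claims it decomposes the gradient as $G(s)=\sum_i G(L_i)$ with $L_i$ the linear factors, reads off that $G(s)$ points strictly inward along $\partial P_\infty$, and obtains the nonemptiness of $C_{s,P}$ from a winding-number count along a Jordan curve enclosing $P_\infty$.

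Your approach is extrinsic: the chordal identity $\log|s|^2=c_0+\sum_j\log|\zeta-\zeta_j|^2$ recasts the problem as logarithmic electrostatics on $S^2\subset\R^3$, and the Lagrange-multiplier equation $B\zeta=\sum_j v_j\zeta_j$ immediately puts any critical point on the ray through a point of the Euclidean hull $Q$; identifying the radial projection of $Q$ with the geodesic hull $P$ then gives $C_s\subseteq P\cup P_\infty$ in one stroke, with no separate cone lemma. Your single identity $\langle\nabla_{S^2}U(\zeta),m\rangle=-\sum_j v_j\langle\zeta_j,m\rangle$ on great circles simultaneously yields the equatorial sign, the interiority at supporting circles, and the outward boundary condition for Poincar\'e--Hopf on the closed hemisphere, so the argument is more economical and even gives the sharper count of at least $n-1$ index-$1$ points in $P$ via $\chi(D^2)=1$. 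What the paper's version buys is a vivid intrinsic picture of each piece $G(L_i)$ pointing from $x_i$ toward its antipode; what yours buys is a cleaner algebraic skeleton. One small point worth stating explicitly: a vertex of $P$ is itself a zero $\zeta_j$ and hence not a critical point of $s$, so your supporting-great-circle argument is only needed at non-vertex boundary points of $P$ and at boundary points of $P_\infty$ (including its vertices, where the full normal cone lets you avoid the degenerate choice); you gesture at this, but making it explicit closes the last gap in the collinear and vertex cases.
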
 
\begin{rem}
\begin{itemize}

\item When the zeroes lie on one minimal geodesic or even on one point, by $P$ we will just mean the closed minimal geodesic or the single point. 

\item Since every point on the Riemann sphere has a unique opposite point, namely the point whose distance to the given point is $\pi/2$ under the Fubini-Study metric. So given a subset $S$ of $\CP^1$, the opposite $S_{\infty}$ is the subset which consists of the opposite points of the points in $S$.

\item The condition of open hemisphere is necessary. For example, when $n=2$ and the two zeroes are a pair of opposite points, then all the points on the equator are critical points.

\item This result implies that there is a nontrivial correlation between the zeroes and the critical points of random sections in $\textrm{H}^0(\CP^1, \ocal(N))$. The readers are referred to \cite{ha} for a close study of this correlation in the Euclidean case.
\end{itemize}
\end{rem}

\smallskip

The idea that a generic section should have have only isolated critical points are known to \cite{dsz1}, and this in fact follows from the arguments there. However, we would like to give a different proof of this fact in the case of $\CP^n$ to illustrate a clearer picture of the bad points.

\begin{theo}\label{theo:nondeg}
In the complement of a subset of zero Lebesgue measure of $H^0(\CP^n, \ocal(m))$, every section $s$ has only non-degenerate isolated critical points. In other words, the function $|s|^2$ is a generalized Morse function.
\end{theo}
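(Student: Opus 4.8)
\emph{Proof sketch.} The plan is to realize the exceptional set as the union of a Bertini-type locus with the critical values of a smooth proper map between two manifolds of the same dimension, and then invoke Sard's theorem. Throughout we assume $m\geq 1$ (for $m=0$ there is nothing to say). First, by Bertini the set of $s\in H^0(\CP^n,\ocal(m))$ whose zero divisor $Z_s$ is singular is a proper subvariety, hence a null set. If $Z_s$ is smooth then $s$ vanishes to first order along $Z_s$, so $\nabla^{1,0}s=\nabla s$ does not vanish on $Z_s$; thus every critical point of $s$ lies in $\CP^n\sm Z_s$, and there (where $s\neq 0$) the equation $\nabla^{1,0}s=0$ is equivalent to $d|s|^2=0$, because $d|s|^2=2\Re\langle\nabla^{1,0}s,s\rangle$ and $\ocal(m)_z$ is one-dimensional. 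So it suffices to show that, off a further null set, $|s|^2$ has only isolated non-degenerate critical points on $\CP^n\sm Z_s$; together with the fact that the minimal critical manifold $Z_s$ is non-degenerate when smooth, this is precisely the statement that $|s|^2$ is a generalized Morse function.

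Consider the incidence set
\[
\wt\Si=\{(s,z)\in H^0(\CP^n,\ocal(m))\times\CP^n:\ \nabla^{1,0}s(z)=0\}.
\]
For each fixed $z$ the map $s\mapsto\nabla^{1,0}s(z)\in T^{*1,0}_z\CP^n\otimes\ocal(m)_z\cong\C^n$ is $\C$-linear and surjective: since $\ocal(m)$ separates $1$-jets for $m\geq1$, one may prescribe $s(z)=0$ and $\d s(z)$ arbitrarily. Hence $\wt\Si$ is the total space of a smooth subbundle of $H^0\times\CP^n$ of corank $n$; in particular it is a smooth submanifold with $\dim_\R\wt\Si=2\dim_\C H^0(\CP^n,\ocal(m))$. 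Let $\pi\colon\wt\Si\to H^0(\CP^n,\ocal(m))$ be the projection, which is smooth and, since $\CP^n$ is compact, proper. As source and target have the same real dimension, Sard's theorem gives that the set $\bcal$ of critical values of $\pi$ has Lebesgue measure zero, while its regular values are dense (as $\dim_\R H^0>0$). For a regular value $s\notin\bcal$, $d\pi$ is an isomorphism at each point of the compact fiber $\pi\inv(s)$, so $\pi\inv(s)$ is finite: the critical points of $s$ are isolated.

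It remains to see that a regular value $s$ gives \emph{non-degenerate} critical points, i.e. that for $(s,z)\in\wt\Si$ with $s(z)\neq0$, $d\pi_{(s,z)}$ is singular if and only if $z$ is a degenerate critical point of $|s|^2$. Now $\ker d\pi_{(s,z)}\neq 0$ iff there is $0\neq v\in T_z\CP^n$ with $(0,v)\in T_{(s,z)}\wt\Si$, i.e. $\nabla_v(\nabla^{1,0}s)(z)=0$; since $\nabla^{1,0}s(z)=0$, this covariant derivative is the intrinsic Hessian of $s$ at $z$. Writing $v=\zeta+\bar\zeta$ with $\zeta\in T^{1,0}_z$ and using $\nabla^{0,1}s=0$ together with the vanishing of the $(2,0)$-part of the curvature, this Hessian splits into a symmetric ``holomorphic'' block $H_{ij}=\langle\nabla^2 s(\d_i,\d_j),s\rangle$ and a Hermitian block produced by the curvature, which by $\Theta_{\ocal(m)}=m\,\om_\FS$ is, up to sign and normalization by $|s|$, equal to $m|s|^2$ times the Fubini--Study form. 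On the other hand, at a critical point one has $\d\d|s|^2$ equal to the pairing of the same $H$ with $s$, while $\d\dbar|s|^2=\langle\nabla^{1,0}s,\nabla^{1,0}s\rangle-\langle\Theta_{\ocal(m)}s,s\rangle=-m|s|^2\,\om_\FS$. Hence the real $2n\times2n$ Hessian of $|s|^2$ at $z$ and the real linear map $v\mapsto\nabla_v(\nabla^{1,0}s)(z)$ coincide, up to complex conjugation and a fixed real-linear isomorphism, with the same matrix, so one is degenerate exactly when the other is. Combined with the preceding paragraph and the Bertini reduction, this proves the theorem.

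The conceptual skeleton here — Sard's theorem applied to an equidimensional incidence manifold — is routine; the real work, and the main obstacle, is the last paragraph, where one must carefully match the linearization of the critical-point equation against the Hessian of $|s|^2$. The step demanding care is the curvature term: it is exactly $\Theta_{\ocal(m)}=m\,\om_\FS$ that makes ``non-degenerate zero of the section $\nabla^{1,0}s$'' agree with ``non-degenerate critical point of $|s|^2$'', two notions that would otherwise differ. One should also note that $\pi$ is not critical everywhere (otherwise Sard's conclusion would be vacuous), which follows from the density of regular values, or concretely from Theorem \ref{theo:quadric} when $m=2$.
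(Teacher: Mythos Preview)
Your argument is correct and takes a genuinely different route from the paper. You run the standard parametric–transversality machine: form the equidimensional incidence manifold $\wt\Si=\{(s,z):\nabla^{1,0}s(z)=0\}$, apply Sard to the proper projection $\pi$, and identify the critical points of $\pi$ with the degenerate critical points of $|s|^2$ via the curvature term $\Theta_{\ocal(m)}=m\,\om_{\FS}$ (this identification is the only real content, and your sketch of it is right: in the local computation both linearizations reduce to the same map $(\zeta,\bar\zeta)\mapsto 2(a_{jk})\zeta-m\bar\zeta$). The paper instead works by explicit dimension counting. Fixing the base point $0$, it uses Proposition~\ref{theo:at 0} to see that degeneracy at $0$ depends only on the quadratic part of $f_s$; after diagonalizing by $\SU(n)$, the degenerate locus $B_0\subset A_0$ is the real hypersurface $\{|a_i|=m/2\text{ for some }i\}$. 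It then verifies by hand, for every other $p\in\CP^n$, that $A_p\cap A_0$ has complex codimension $n$ in $A_0$ and is not contained in $B_p$, so the family $\{B_p\cap A_0\}_{p\in\CP^n}$ is a $2n$-parameter family of sets of real codimension $2n+1$ in $A_0$, whence its union has measure zero. Your approach is cleaner, requires essentially no computation, and carries over verbatim to any Hermitian line bundle that separates $1$-jets on a compact K\"ahler manifold. The paper's approach is more laborious but delivers more: the exceptional set is exhibited as (the image of) an explicit real-algebraic set of positive codimension, not merely a Sard null set, which is exactly the paper's stated aim ``to illustrate a clearer picture of the bad points.'' One small remark: your final worry that $\pi$ might be critical everywhere is unnecessary---Sard already guarantees the critical \emph{values} form a null set, which is all you use.
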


\smallskip

This article is organized as follows. In section \ref{sec:basic}, we introduce basic background of critical points of holomorphic sections line bundles. And after introducing the basic ideas in our calculations, we will prove theorem \ref{theo:nondeg}.  In section \ref{sec:quadrics}, we prove theorem \ref{theo:quadric}. Then after quickly introducing the generalized Morse theory by Atiyah and Bott, we prove corollary \ref{cor}. In section \ref{sec:gauss}, we study critical points on the Riemann sphere and prove theorem \ref{theo:gauss}.

\textbf{Acknowledgement}: The author would like to especially  thank Professor Bernard Shiffman for his continuous help, without which this work would be impossible to appear.  The author would like to thank Professor Steve Zelditch for his important encouragements. Finally the author would like to thank Renjie Feng and Yi Zhu for helpful discussions.
\section{Critical points and their non-degeneracy}\label{sec:basic}

We refer to \cite{dsz1} for more detailed introduction to critical points of holomorphic sections of a line bundle over a complex manifold.

Let $L$ be an ample line bundle over a complex manifold $X$. If $L$ is equipped with a Hermitian metric $h$, then there exists a unique Hermitian connection $\nabla:\Gamma(X,L)\rightarrow \Gamma(X, T^*\otimes L)$. If we have a local frame $e_L$, we write $|e_L|_h^2=e^{-\phi}$. Let $s\in \textrm{H}^0(X,L)$, then locally $s=fe_L$. Then $\nabla s=(df-fd\phi)\otimes e_L$. Since $T^*\otimes\C=T^{*'}\oplus T^{*''}$, the holomorphic cotangent bundle and its conjugate. We can then decompose $\nabla=\nabla'+\nabla''$ accordingly. So $\nabla' s=(\d f-f\d\phi)\otimes e_L$. Since $\nabla''s=0$ for holomorphic sections, $\nabla s=0$ is equivalent to $\nabla' s=0$.

Consider the global function $|s|^2$, which locally equals $|f|^2e^{-\phi}$. Then away from the zero locus of $s$, a point is a critical point of $|s|^2$ if and only if it is a critical point of $\log |s|^2$. Also the indices are the same at each such point.  Now  $d\log |s|^2=0$ is equivalent to $\d \log |s|^2=0$ and locally $\d \log |s|^2=\frac{\d f}{f}-\d\phi$. Therefore away from the zero locus of $s$, the critical points of $s$ and those of $\log |s|^2$ coincide. Also when the index of the latter is compounded with $(-1)^{\bullet}$, it coincide with the topological index of the former at a critical point. In this article, when we refer to the index $s$ at a point, we will mean the index $\log |s|^2$ at that point.

To compute the indices, we need to calculate the Hessian. It is shown in \cite{dsz1} that by conjugating the Hessian matrix 
\[ \left( \begin{array}{cc}
(\frac{\d^2}{\d x_j \d x_k}\log |s|^2) & (\frac{\d^2}{\d x_j \d y_k}\log |s|^2) \\
(\frac{\d^2}{\d y_j \d x_k}\log |s|^2) & (\frac{\d^2}{\d y_j \d y_k}\log |s|^2) \end{array} \right)\]

with the unitary matrix 
\[\frac{1}{\sqrt{2}}\left(\begin{array}{cc}
I & iI\\
iI & I\end{array}\right)\]

one gets 
\[ 2\left( \begin{array}{cc}
(\frac{\d^2}{\d z_j \d \bar{z}_k}\log |s|^2) & (i\frac{\d^2}{\d z_j \d z_k}\log |s|^2) \\
(-i\frac{\d^2}{\d \bar{z}_j \d \bar{z}_k}\log |s|^2) & (\frac{\d^2}{\d z_j \d \bar{z}_k}\log |s|^2) \end{array} \right)\]

We will denote the above matrix by $J_s$

From now on we will focus on the line bundles $\ocal(m)$ over $\CP^n$. Using homogeneous coordinates $[Z_0,Z_1,\cdots, Z_n]$ for $\CP^n$, we denote by $U_i$ the open affine set $Z_i\neq 0$, and by $H_i$ the complement of $U_i$, which is a hyperplane. Because of the transitive action of $SU(n+1)$ on $\CP^n$, we need only to study the situation at one point. We will fix $0=[1,0,0,\cdots,0]\in U_0$. Then in $U_0$, we use coordinates $z_i=\frac{Z_i}{Z_0}$. Thus any $s\in H^0(\CP^n, \ocal(m))$ can be written as a polynomial $f_s$ of degree at most $m$. And the Fubini-Study potential is $\phi=m\log (1+|z|^2)$, where $|z|^2=\sum_{i=1}^n |z_i|^2$. So we have
\begin{eqnarray}
\d \phi&=&\frac{\sum_{i=1}^nm\bar{z_i}d z_i}{1+|z|^2}\\
\frac{\d^2}{\d z_j \d z_k} \phi&=&-m\frac{\bar{z}_j\bar{z}_k}{1+|z|^2}\\
\frac{\d^2}{\d z_j \d \bar{z}_k} \phi&=&m\frac{\delta_{jk}(1+|z|^2)-\bar{z}_jz_k}{(1+|z|^2)^2}
\end{eqnarray} 
where by $|z|^2$ we mean $\sum_{i=1}^n|z_i|^2$, and $\delta_{jk}$ is the Kronecker delta.
\smallskip

Now we consider the linear map $\nabla' |_0: H^0(\CP^n, \ocal(m))\rightarrow T_0^{*'}\otimes \ocal(m)_0$, defined by $\nabla' |_0(s)=\nabla's(0)$.

 \begin{claim}  $\nabla' |_0$ is surjective.  
 \end{claim}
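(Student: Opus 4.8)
The plan is to compute $\nabla'|_0$ explicitly in the affine chart $U_0$ and then exhibit enough sections to hit a basis of the target. First I would note that, by the formula $\nabla' s = (\d f_s - f_s\,\d\phi)\otimes e_L$ together with the already-computed expression $\d\phi = \frac{\sum_{i=1}^n m\bar z_i\,dz_i}{1+|z|^2}$, which vanishes at the origin $z=0$, one has
\[ \nabla'|_0(s) \;=\; \d f_s(0)\otimes e_L(0) \;=\; \sum_{i=1}^n \frac{\d f_s}{\d z_i}(0)\; dz_i \otimes e_L(0). \]
Thus, identifying $T_0^{*'}\otimes\ocal(m)_0$ with $\C^n$ via the coframe $dz_1,\dots,dz_n$ and the frame $e_L$, the (clearly linear) map $\nabla'|_0$ sends a section $s$ to the gradient vector $\big(\d f_s/\d z_1(0),\dots,\d f_s/\d z_n(0)\big)$ of its polynomial representative at the origin.

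Next I would produce sections realizing each coordinate direction. For $1\le j\le n$, consider the monomial $s_j = Z_j Z_0^{m-1}\in H^0(\CP^n,\ocal(m))$, which makes sense for $m\ge 1$; its representative in $U_0$ is $f_{s_j} = z_j$, so $\d f_{s_j}/\d z_k(0) = \delta_{jk}$ and hence $\nabla'|_0(s_j) = dz_j \otimes e_L(0)$. Since $dz_1,\dots,dz_n$ is a basis of $T_0^{*'}$, the images $\nabla'|_0(s_1),\dots,\nabla'|_0(s_n)$ span $T_0^{*'}\otimes\ocal(m)_0$, which proves surjectivity.

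I do not expect any genuine obstacle here: the statement is essentially the fact that $\ocal(m)$ separates $1$-jets at a point (equivalently, is very ample for $m\ge 1$), specialized to the basepoint $0$, and the computation above gives an explicit right inverse. The only two things to keep in mind are that $\d\phi$ vanishes at the chosen basepoint — which is precisely why we normalized to work at $0=[1,0,\dots,0]$, and which holds for the Fubini--Study potential there — and that the $s_j$ are honest global holomorphic sections whose affine representatives have degree $\le m$; both are immediate.
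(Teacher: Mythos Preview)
Your proof is correct and follows essentially the same approach as the paper: both use that $\partial\phi(0)=0$ so that $\nabla'|_0(s)=\partial f_s(0)\otimes e_L(0)$, and then observe that the sections with affine representative $z_j$ (i.e.\ $Z_jZ_0^{m-1}$) hit a basis of $T_0^{*'}\otimes\ocal(m)_0$. Your version is in fact a bit more carefully written than the paper's, which refers loosely to ``the images of $Z_i$, $i\neq 0$'' rather than $Z_iZ_0^{m-1}$.
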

 \begin{proof}
 Since $\partial \phi(0)=0$, $\nabla's(0)=\partial f_s$. Therefore, the images of $Z_i, i\neq 0$ under  $\nabla' |_0$ span $T_0^{*'}\otimes \ocal(m)_0$. 
 \end{proof} 
So if we denote by $A_0$ the kernel of $\nabla' |_0$, then $A_0$ is a subspace of $H^0(\CP^n, \ocal(m))$ of codimension $n$. Actually one can easily see that $A_0$ is spanned by monomials $Z^\alpha$, where $\alpha=(\alpha_0,\alpha_2,\cdots, \alpha_n)$ is a multi-index satisfying $|\alpha|=m$ and $\alpha_{0}\neq m-1$.

Now let $s\in A_0$ and assume $f_s(0)=1$.  Then we have

\begin{eqnarray*}
\frac{\d^2}{\d z_j \d \bar{z}_k}\log |s|^2 (0)&=&-m\delta_{jk}\\
\frac{\d^2}{\d z_j \d z_k}\log |s|^2(0) &=&2a_{jk}\\
\frac{\d^2}{\d \bar{z}_j \d \bar{z}_k}\log |s|^2(0)&=&2\bar{a}_{jk}
\end{eqnarray*}
where $\sum_{j,k=1}^na_{jk}z_jz_k$ with $a_{jk}=a_{kj}$ is the degree 2 homogeneous part of  $f_s$.

Therefore the matrix $J_s$ at $0$ has a very simple form:
\[J_s(0)=2\left(\begin{array}{cc}
-mI & (i2a_{ij})\\
(-i2\bar{a}_{ij}) & -mI
\end{array}\right)\]

We will denote $A=(i2a_{ij})$. If we let $Q_s$ denote the following upper triangular block matrix
\[Q_s=\left(\begin{array}{cc}
I & \frac{1}{n}A\\
0 & I 
\end{array}\right)\]
then let $Q^*_s$ denote the conjugate transpose of $Q_s$, we have the following equation of matrices
\[
Q_sJ_sQ_s^*=\left(
\begin{array}{cc}
-nI+\frac{1}{n}A\bar{A} & 0\\
0 & -nI

\end{array}
\right)\]
We conclude the calculations above by the following proposition
\begin{prop}\label{theo:at 0}
$0$ is an isolated and non-degenerate critical point if and only if $-nI+\frac{1}{n}A\bar{A}$ is non singular.
\end{prop}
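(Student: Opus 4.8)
The plan is to deduce the statement immediately from the block‑diagonalization of $J_s(0)$ obtained above, together with the Morse lemma. First I would make explicit the normalizations that are already built into that computation. If $0$ is to be a critical point of the holomorphic section $s$, then $0$ cannot lie on the zero set of $s$ (there $\log|s|^2$ is not even finite), so after rescaling $s$ we may assume $f_s(0)=1$; and then $\nabla's(0)=\d f_s(0)$ vanishes precisely when $s\in A_0$, which is exactly the hypothesis under which the matrix $J_s(0)$ was computed. Under these assumptions $\log|s|^2$ is smooth near $0$, the point $0$ is a critical point of it, and, as recalled above, its real Hessian at $0$ is unitarily conjugate to $J_s(0)$; in particular the Hessian is non‑singular if and only if $J_s(0)$ is.

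The substance is then one line of linear algebra. Since $Q_s$ is upper triangular with identity diagonal blocks it is invertible, so $J_s(0)$ and $Q_sJ_s(0)Q_s^*$ are congruent and hence have the same rank. By the identity
\[Q_sJ_s(0)Q_s^*=2\left(\begin{array}{cc}-nI+\frac{1}{n}A\bar{A} & 0\\ 0 & -nI\end{array}\right)\]
established above, and because a block‑diagonal matrix is non‑singular exactly when each diagonal block is while the block $-nI$ is already invertible, we conclude: $J_s(0)$ is non‑singular $\Longleftrightarrow$ $-nI+\frac{1}{n}A\bar{A}$ is non‑singular. Combining with the previous paragraph, the Hessian of $\log|s|^2$ at $0$ is non‑degenerate if and only if $-nI+\frac{1}{n}A\bar{A}$ is non‑singular.

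Finally I would match ``non‑degenerate Hessian'' with ``isolated and non‑degenerate critical point.'' If $-nI+\frac{1}{n}A\bar{A}$ is non‑singular, then $0$ is a non‑degenerate critical point of the smooth function $\log|s|^2$, and by the Morse lemma such a critical point is automatically isolated, which gives the ``if'' direction; conversely, an isolated and non‑degenerate critical point has in particular non‑singular Hessian, so $-nI+\frac{1}{n}A\bar{A}$ is non‑singular (the ``isolated'' hypothesis is not even needed here). I do not expect a genuine obstacle: all the analysis sits in the Hessian computation already performed, and the only points deserving a word of care are the harmless normalizations $s\in A_0$ and $f_s(0)=1$, and the remark that $0$ lies off the zero set of $s$, so that $\log|s|^2$ is genuinely smooth there and the Morse lemma applies.
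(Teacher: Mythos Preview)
Your proposal is correct and follows exactly the paper's approach: the paper simply states the proposition as an immediate conclusion of the block--diagonalization $Q_sJ_s(0)Q_s^*$, and you have merely spelled out the obvious details (congruence by the invertible $Q_s$ preserves non\-singularity, the $-nI$ block is already invertible, and the Morse lemma makes non\-degenerate imply isolated). One cosmetic point worth flagging is that the paper's $n$ in $Q_s$ and in $-nI+\frac{1}{n}A\bar A$ is a typo for $m$ (the degree of $\ocal(m)$), as you can check by redoing the block multiplication; this does not affect your argument.
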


The remaining of this section will be devoted to the proof of theorem \ref{theo:nondeg}

\begin{proof}
When $m=1$, this statement is trivial. When $m=2$, this will become very clear after the proof of theorem \ref{theo:quadric} in section \ref{sec:quadrics}. So in the following, we assume that $m> 2$.

Let $s\in A_0$, we assume $f_s(0)=1$ and denote by $f_{s,2}$ the homogeneous part of degree $2$. Furthermore, by applying a $SU(n+1)$ action, we assume $f_{s,2}=\sum_{i=1}^na_iz_i^2$. Then the condition that $0$ is a degenerate critical point of $s$ is equivalent to the condition that $|a_i|=\frac{m}{2}$ for some $i$. From this, one sees that the degenerate condition defines real  codimension one subvariety. We denote by $B_0\subset A_p$ this subvariety.

Next, we need to consider the intersections $B_p\cap A_0$ for $p\in \CP^n$. If we can show that the union of these intersections is of Lebesque measure zero in $A_0$, this will prove the theorem. Recall that the elements in $SU(n+1)$ that fix $0$ is a subgroup which is isomorphic to $SU(n)$

If $p\in H_0$, then because of the transitive action of $SU(n)\subset SU(n+1)$, we can assume that $p=[0,1,0,\cdots,0]$. Then $A_0\cap A_p$ as a subspace of $A_0$ is defined by $Z_iZ_1^{m-1}=0$ for $i\neq 1$. So  $A_0\cap A_p$ is of codimension $n$. Also, $A_0\cap A_p$ is not contained in $B_p$, so $B_p\cap A_0$ is a codimension 1 real subvariety of $A_0\cap A_p$.

When $p\in U_0$, we want to get the same conclusion.  Let us consider the morphism $\Psi_0: \CP^n\rightarrow \PP A_0^*$. $\Psi_0$ is indeed a morphism, but not an embedding: with the coordinates of $U_0$, $\Psi_0(z)=(z_1^2, z_1z_2, z_1z_3,\cdots)$. So it is easy to check that when $p\neq 0$, $d\Phi_0$ is of rank $n$, namely $\Phi_0$ is a local embedding around $p$. Therefore, for any $\xi\in T*'_p$, there exists $s\in A_0$ such that $s(p)=0$ and $\partial s=\xi$. This implies that $\nabla' |_p(A_0)=T_p^{*'}\otimes \ocal(m)_p$, therefore $A_0\cap A_p$ is of codimension $n$ in $A_0$. It now remains to show that $A_0\cap A_p$ is not contained in $B_p$.

\smallskip

 Again, because of the action of $SU(n)\subset SU(n+1)$, we can assume that $$p=[1,x,0,0,\cdots,0],$$where $x$ is a positive number. 

To characterize elements in $A_p$, we consider the unitary map $M_p$ defined by $$M_p(Z_0)=\frac{Z_0+xZ_1}{1+x^2}, M_p(Z_1)=\frac{xZ_0-Z_1}{1+x^2}$$ and $M_p(Z_i)=Z_i$ for $i\neq 0,1$. On can easily see that $M_p$  switch $0$ and $p$. So $M_p^{-1}=M_p$. Now $M_p$ defines an isomorphism, which we still denote by $M_p$, by $M_p(s)(Z)=s(M_p(Z))$ for any $s\in A_0$. Now let $s=aZ_0^{m-3}Z_1^3+bZ_0^{m-2}Z_1^2+Z_0^m$, then by the characterization of $A_0$, it is clear that $s\in A_0$. So $M_p(s)\in A_p$. In terms of the coordinates of $U_0$, one can compute and conclude that 
\begin{eqnarray*}
M_p(s)(z)=\frac{1}{(1+x^2)^m}\{1-ax^3+bx^2+[b(x^2(m-2)-2)\\ +a(3x-(m-3)x^3)+m]xz_1+\dots\}
\end{eqnarray*}
Now we want to show that we can find $a$ and $b$ such that $b(x^2(m-2)-2)\\+a(3x-(m-3)x^3)+m=0$, namely $M_p(s)\in A_0\cap A_p$, and $|b|\neq \frac{m}{2}$, namely $M_p(s)\notin B_p$. But this is very easy to see.

Therefore, for every point $p\in \CP^n$ different from $0$, the intersection $B_p\cap A_0$ is a codimension 1 real subvariety of $A_p\cap A_0$, which is a codimension $n$ subspace of $A_0$. This implies that outside of a measure $0$ subset of $A_0$, every $s\in A_0$ has only isolated non-degenerate critical points, which furthermore implies the theorem.
\end{proof}

\section{Quadrics and generalized Morse theory}\label{sec:quadrics}
Now  let $s\in \textrm{H}^0(\CP^n,\ocal(2))$. We suppose that the zero locus of $s$ is non singular. Then by unitary transformations, we can assume that $s=\sum_{i=0}^na_iZ_i^2$, with $$0< a_0\leq a_1\leq a_2\leq \cdots\leq a_n.$$ So if we let $p_i$ be the point with $Z_i$ the only non-vanishing coordinate. Then the $p_i$'s are critical points of $s$.

 Now we restrict $s$ to $U_0$.  Since $p_0=0$, $s\in A_0$. 
 \begin{theo}
 If the sequence $a_i$ is strictly increasing, then $0$ is the only critical points of $s$ in $U_0$. Moreover, $0$ is non-degenerate and of index $n$ (as a critical point of $|s|^2$)
 \end{theo}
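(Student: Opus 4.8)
The plan is to work entirely in the affine chart $U_0$, where $s$ is represented by the polynomial $f(z)=a_0+\sum_{i=1}^n a_i z_i^2$ and the Fubini--Study potential is $\phi=2\log(1+|z|^2)$. Away from the zero locus of $s$, a point is a critical point of $s$ precisely when $\partial\log|s|^2=0$, which componentwise reads
\[
\frac{2a_j z_j}{f(z)}=\frac{2\bar z_j}{1+|z|^2},\qquad j=1,\dots,n .
\]
I would first dispose of the zero locus: on $\{f=0\}$ one has $\nabla' s=\partial f\otimes e_L$, and since the zero locus of $s$ is nonsingular, $\partial f$ has no zero there; hence every critical point of $s$ in $U_0$ satisfies the displayed equations with $f(z)\neq 0$.

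The heart of the argument is to take absolute values in these equations. For every index $j$ with $z_j\neq 0$ this yields the scalar identity $a_j\,(1+|z|^2)=|f(z)|$. Since $a_1,\dots,a_n$ are pairwise distinct, at most one coordinate $z_j$ can be nonzero. If all $z_j=0$ then $z=0=p_0$, which is indeed a critical point (there $\partial f$ and $\partial\phi$ both vanish). If exactly one coordinate $z_k$, $k\ge 1$, is nonzero, then
\[
|f(z)|=|a_0+a_k z_k^2|\le a_0+a_k|z_k|^2< a_k\bigl(1+|z_k|^2\bigr)=a_k\bigl(1+|z|^2\bigr),
\]
the strict inequality using $a_0<a_k$; this contradicts $a_k(1+|z|^2)=|f(z)|$. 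Hence $0$ is the only critical point of $s$ in $U_0$.

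For non-degeneracy and the index I would use the Hessian computation of Section~\ref{sec:basic}. After normalizing so that $f_s=f/a_0=1+\sum_{i=1}^n (a_i/a_0)z_i^2\in A_0$, the degree-two part is diagonal, so the matrix $A$ there equals $2i\,\diag(a_1/a_0,\dots,a_n/a_0)$ and $A\bar A=4\,\diag\bigl((a_1/a_0)^2,\dots,(a_n/a_0)^2\bigr)$. A congruence of the type carried out in Section~\ref{sec:basic} brings $J_s(0)$ to block-diagonal form with blocks $-2I$ and $-2I+\frac12 A\bar A=2\,\diag\bigl((a_i/a_0)^2-1\bigr)$. Since the sequence is strictly increasing and $i\ge 1$, each $(a_i/a_0)^2-1$ is strictly positive; therefore $J_s(0)$ is nonsingular, so $0$ is non-degenerate, and its only negative eigenvalues are the $n$ coming from the block $-2I$. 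Hence the index of $|s|^2$ at $0$ equals $n$.

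All the computations are short. The one step requiring genuine care is the passage to absolute values: one must notice that the critical equations force the scalar identity $a_j(1+|z|^2)=|f(z)|$ simultaneously for every $j$ with $z_j\neq 0$, which is exactly where the distinctness of the $a_i$ — and, for the final contradiction, the inequality $a_0<a_k$ — is used, together with the routine observation that critical points avoid the zero locus. I do not anticipate any serious obstacle.
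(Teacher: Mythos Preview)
Your argument is correct, and the index computation is essentially identical to the paper's. The uniqueness part, however, is organized differently. The paper proceeds by induction on $n$: in dimension one the equation reduces to $a_1 z=a_0\bar z$, forcing $z=0$; in higher dimension, the inductive hypothesis shows that a nontrivial critical point must have every coordinate nonzero, and then dividing the $i$-th equation by the $j$-th gives $\frac{a_i}{a_j}\frac{z_i}{z_j}=\overline{z_i/z_j}$, which has no nonzero solution when $a_i\neq a_j$.

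Your route---taking absolute values to obtain the scalar identity $a_j(1+|z|^2)=|f(z)|$ for every $j$ with $z_j\neq 0$---is more direct: it avoids the induction entirely and immediately forces at most one nonzero coordinate, after which the single triangle-inequality estimate finishes the job. The paper's ratio argument, on the other hand, makes the role of the pairwise distinctness of the $a_i$ slightly more transparent at the level of the equations themselves. Both approaches are short; yours is the quicker of the two.
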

\begin{proof}

Since $f_s=a_0+\sum_{i=1}^na_iz_i^2,  \d f_s=\sum_{i=1}^n2a_iz_id z_i$, and $\d \phi=\frac{\sum_{i=1}^n2\bar{z_i}d z_i}{1+|z|^2}$, to find critical points of $s$ in $U_0$ is the same thing as solving the system of equations $2a_iz_i=f_s\frac{2\bar{z}_i}{1+|z|^2}$ for $1\leq i\leq n$. 

We will use induction on $n$.

When $n=1$, the equation becomes $(1+|z|^2)a_1z_1=(a_0+a_1z^2)\bar{z}$, which is the same as $a_1z=a_0\bar{z}$. But since $a_0< a_1$, the only solution is $0$.

Now assume the statement holds for $n-1$, then one can see first of all that $z_i\neq 0$ for all $1\leq i\leq n$: since if otherwise, the remaining equations will form a system with $n-1$ variables, and by induction there is no solution other than $0$. Therefore we can consider pairs $\frac{z_i}{z_j}$ by taking quotients of the equations in the system.
And we get $\frac{a_i}{a_j}\frac{z_i}{z_j}=\frac{\bar{z}_i}{\bar{z}_j}$. Then since $\frac{a_i}{a_j}$ is real and not $1$, we see that there is no solution other than $0$.
This finish the part for the uniqueness.

To calculate the index, we will use the results of the calculations in section \ref{sec:basic}. 

Since we want to set $f_s(0)=1$, we divide $f_s$ by $a_0$ and the quotient $g_s=1+\sum_{i=1}^nb_iz_i^2$, where $b_i=\frac{a_i}{a_0}$. So by the assumption on $a_i$, we have $1<b_1<b_2<\cdots <b_n$. So by the notation of section \ref{sec:basic},
$$A=\textit{diag}(2ib_1, 2ib_2,\cdots, 2ib_n)$$
Now it is clear that the index of the matrix
\[\left(
\begin{array}{cc}
-nI+\frac{1}{n}A\bar{A} & 0\\
0 & -nI

\end{array}
\right)\] 
is $n$. Also since the above matrix is non singular, $0$ is isolated and non-degenerate as a critical point of $s$

\end{proof}

By similar argument as in the proof of the theorem above, one sees that the $p_i$'s, $0\leq i\leq n$ are all the critical points of $s$, which are all non-degenerate. And the index  at $p_i$ is $n+i$. So we have proved theorem \ref{theo:quadric}.

To prove the corollary, let us first recall the generalized Morse theory by Atiyah and Bott \cite{ab}. 

Given a smooth function $f$ over a  compact smooth manifold $M$. If all of the critical points of $f$ are non-degenerate, then we can form the Morse counting-series
$$M_t(f)=\sum_pt^{\lambda_p(f)},  df(p)=0$$
where the sum ranges over the critical points of $f$, and $\lambda_p(f)$ is the index of $f$ at $p$. And such a function $f$ is called a Morse function.

Now let $P_t(M)=\sum t^i \dim H^i(M, \R)$ be the Poincar\'e series for $M$. Then we have the following Morse inequalities: There exists a polynomial $R(t)$ with non-negative coefficients, such that $$M_t(f)-P_t(M)=(1+t)R(t)$$

The generalized Morse theory replace non-degenerate critical points by non-degenerate critical manifolds. More precisely, a connected submanifold $N\subset M$ is called a non-degenerate critical manifold for $f$ if and only if $df\equiv 0$ along $N$ and $H_Nf$ is non-degenerate on the normal bundle $\nu(N)$ of $N$, where $H_Nf$ is the Hessian.  If this is the case, $H_Nf$ defines a canonical self-adjoint endomorphism: $A_N:\nu(N)\rightarrow \nu(N)$ by the formula $(A_Nx,y)=H_Nf(x,y),$ for any $x,y\in \nu (N)$. Then the index $\lambda_N$ of $f$ at $N$ is defined to be the dimension of the subbundle $\nu^-(N)\subset \nu(N)$, which is defined to be the vectors which are eigen-vectors of $A_N$ with negative eigenvalues. 

Assume all the critical manifolds $N$ are non-degenerate, $f$ is then called a generalized Morse function. If furthermore all $N$'s are orientable (in \cite{ab}, also the non-orientable case is also considered, but for the use of this article, we only need the orientable case ), we can also define the Morse counting-series by 
$$M_t(f, N)=\sum t^{\lambda_N}P_t(N)$$
Then the Morse inequality persists.

Now the zero locus $D$ of a non-degenerate  $s\in H^0(\CP^n,\ocal (2)))$ is smooth and therefore a non-degenerate critical manifold with index $0$ of the function $|s|^2$(\cite{bo}). Therefore for a general $s$ as in the assumption of theorem \ref{theo:quadric}, $|s|^2$ is a generalized Morse function. And we can apply the Morse inequality to get 
$$P_t(D)+\sum_{i=0}^{n-1}t^{n+i}-P_t(\CP^n)=(1+t)R(t)$$
Now by Lefschetz hyperplane theorem and Poincar\'e duality, when $n=2k+1$
$$h^{2k}(D)t^{2k}+\sum_{i=0}^nt^{n+i}=(1+t)R(t)$$
where $h^{2k}(D)=\dim H^{2k}(D,\R)$.

Letting $t=-1$, we see that $h^{2k}(D)=2$.

So the corollary is proved.

\begin{rem}
It is interesting to give a slightly different proof of this corollary here.

A very similar argument in in the proof of theorem \ref{theo:quadric} can be used to prove that in the special case when all the $a_i$'s are the same, the submanifold $N=\{\textit{all the } Z_i \textit{ are real}\}$, which is just an $\R\PP^n$, is a non-degenerate critical manifold of index $n$ of $|s|^2$. Recall that the Poincar\'e polynomial of $\R\PP^n$ is $1+t^n$ when $n$ is odd. Then another application of the generalized Morese inequality will result in the same conclusion.

\end{rem}

\section{Spherical Gauss-Lucas theorem}\label{sec:gauss}

Now we come to the proof of theorem \ref{theo:gauss}.

The reason that we exclude the case when $n=1$ is simple: the zero of $s$ is not a critical point of $s$, although it is a critical point of $|s|^2$. When $n=2$, by theorem \ref{sec:quadrics}, $s$ has only two critical points. One is the middle point of the minimizing geodesic segment connecting the two zeroes, and it is of index $1$; the other one is the opposite point of the index 1 critical point, and it is of index $2$. So the theorem holds for these two cases.

When $n\geq 3$, the situation gets much more complicated. To prove the theorem, we need to first distinguish the points that are inside the convex hull and that are outside. Given a convex polygon $P\subset \CP^1$ that is contained in an open hemisphere. Let $P_{\infty}$ be the opposite polygon of $P$.  We denote by $v_i$ the vertices of $P$. Now given a point $q$ in $\CP^1$, we assume that $q$ is not one of the opposite points of the $v_i$'s. Then for each $v_i$, there exits an unique minimizing geodesic $l_i(q)$ from $q$ to $v_i$. The tangent vector of $l_i(q)$ at $q$ generates a ray $r_i(q)$ in the tangent space of $\CP^1$ at $q$.

\begin{lem}
With the notations as above. A point $q\in \CP^1$ is contained in the inside of $P$ or the inside of $P_{\infty}$ if and only if the convex cone $K_{P}(q)$ generated by the $r_i$'s is the whole tangent space $T_q=T_{\CP^1,q}$, namely there does not exist a line through the origin in $T_{\CP^1,p}$ such that all the rays $r_i$ lie on one side of the line. 
\end{lem}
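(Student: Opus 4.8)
The plan is to pass from $\CP^1$ with its Fubini-Study metric to the round $2$-sphere $S^2\subset\R^3$, so that the "opposite point" operation becomes the Euclidean antipodal map $x\mapsto-x$. Under this identification a convex polygon $P$ contained in an open hemisphere is exactly $P=S^2\cap C$, where $C=\{\sum_i t_i v_i:t_i\ge 0\}\subset\R^3$ is the closed convex cone generated by the vertices $v_i$ regarded as unit vectors; this cone is pointed because $P$ lies in an open halfspace $\{\langle\cdot,u\rangle>0\}$, and $P_{\infty}=S^2\cap(-C)$. (That the geodesically convex hull of the $v_i$ inside a hemisphere coincides with $S^2\cap C$ is elementary: $S^2\cap C$ is geodesically convex since minimizing arcs between points of an open hemisphere are radial projections of segments, and every $p=\sum t_iv_i/|\sum t_iv_i|$ lies in the hull by induction on the number of nonzero $t_i$.) If $P$ is a single point or a minimal geodesic, then $C$ is not full-dimensional, the "inside" of $P$ and of $P_{\infty}$ is empty, and the rays $r_i(q)$ span at most a halfplane, so the asserted equivalence holds trivially; hence I may assume $C$ has nonempty interior. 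Then $\operatorname{int}_{S^2}P=S^2\cap\operatorname{int}C$ and likewise for $P_{\infty}$, and since $\operatorname{int}C$ is a cone, $q$ lies in $\operatorname{int}P\cup\operatorname{int}P_{\infty}$ if and only if the line $\R q$ meets $\operatorname{int}C$.

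Next I would convert the condition $\R q\cap\operatorname{int}C=\emptyset$ into a supporting-functional statement. Since $\operatorname{int}C$ is open, convex and nonempty and $\R q$ is a line disjoint from it, Hahn--Banach separation yields a nonzero $\nu\in\R^3$ and a constant $\alpha$ with $\langle x,\nu\rangle\le\alpha\le\langle y,\nu\rangle$ for all $x\in\operatorname{int}C$, $y\in\R q$; evaluating on $y=tq$ for all $t\in\R$ forces $\langle q,\nu\rangle=0$, and since $0\in\overline{\operatorname{int}C}=C$ and $0\in\R q$ one gets $\alpha=0$. Thus $\R q\cap\operatorname{int}C=\emptyset$ if and only if there is $\nu\in T_q\smallsetminus\{0\}$ (the condition $\langle q,\nu\rangle=0$ being exactly $\nu\in T_qS^2$) with $\langle v_i,\nu\rangle\le 0$ for every $i$; the converse direction is immediate, since such a $\nu$ places $\R q$ in $\nu^{\perp}$ and $\operatorname{int}C$ in $\{\langle\cdot,\nu\rangle<0\}$. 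This argument also absorbs the boundary case $q=v_i$: a supporting hyperplane of $C$ at $v_i$ has normal lying in $T_{v_i}$.

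Finally I would identify this with the tangent-cone condition of the lemma. For $v_i\ne\pm q$ the unique minimizing geodesic from $q$ to $v_i$ has initial direction a positive multiple of the orthogonal projection $v_i-\langle v_i,q\rangle q$ of $v_i$ onto $T_q$, so $r_i(q)=\R_{>0}\,(v_i-\langle v_i,q\rangle q)$; and for $\nu\in T_q$ one has $\langle v_i-\langle v_i,q\rangle q,\nu\rangle=\langle v_i,\nu\rangle$. Hence the existence of $\nu\in T_q\smallsetminus\{0\}$ with $\langle v_i,\nu\rangle\le0$ for all $i$ says exactly that all the rays $r_i(q)$ lie in one closed halfplane of $T_q$, i.e. that the convex cone $K_P(q)$ they generate is not all of $T_q$. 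Chaining the three equivalences and negating gives the lemma. I expect the only genuinely delicate point to be the reduction in the first paragraph — checking that the geodesically convex polygon in a hemisphere is the trace on $S^2$ of the linear cone on its vertices and that interiors correspond — after which the argument is routine linear convexity in $\R^3$ and $\R^2$.
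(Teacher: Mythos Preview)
Your argument is correct and takes a genuinely different route from the paper. The paper proceeds by direct geometric case analysis on the sphere: it observes that $K_P(q)=-K_{P_\infty}(q)$ for every $q$, so the two ``inside'' cases follow from the obvious one; for $q$ on an edge it notes that the great circle through that edge separates $P$ into one closed hemisphere, forcing $K_P(q)$ to be a halfplane; and for $q$ outside $P\cup P_\infty$ and off all edges it argues that a minimizing geodesic from $q$ into $P$, when continued, cannot meet $P_\infty$ before reaching the antipode of $q$, whence $K_P(q)\cap K_{P_\infty}(q)=0$ and so $K_P(q)\ne T_q$. Your approach instead linearizes the problem by realizing $P$ as $S^2\cap C$ for a pointed cone $C\subset\R^3$, translates ``$q$ is inside $P$ or $P_\infty$'' into ``$\R q$ meets $\operatorname{int}C$,'' and then uses Hahn--Banach separation together with the identity $\langle v_i-\langle v_i,q\rangle q,\nu\rangle=\langle v_i,\nu\rangle$ for $\nu\in T_q$ to match the separating functional with a halfplane in $T_q$ containing all the $r_i(q)$. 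The paper's proof is shorter and entirely intrinsic to the sphere, but leans on a visual claim (the continued geodesic misses $P_\infty$) that one must unpack; your proof is more systematic and makes the convexity content explicit, at the cost of the preliminary identification $P=S^2\cap C$, $\operatorname{int}_{S^2}P=S^2\cap\operatorname{int}C$. One small remark: your dismissal of the degenerate case where $P$ is a segment as ``trivial'' is a bit quick --- the cone $C$ then has empty interior so the Hahn--Banach step as written does not apply, though one can still produce the required $\nu\in T_q$ directly (e.g.\ the normal to the plane of the segment when $q$ lies on the great circle, and otherwise by noting that the polar cone $C^*$ has nonempty interior). In any event the lemma as stated is about a genuine polygon, so this does not affect your main argument.
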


\begin{proof}
When $q$ is contained in the inside of $P$, it is clear that $K_P(q)=T_q$. When $q$ is contained in the side of $P_{\infty}$, we have $K_P(q)=-K_{P_{\infty}}(q)$. But since in this case $K_{P_{\infty}}(q)=T_q$, we also have $K_P(q)=T_q$.

Now we assume that $q$ is contained in neither the inside of $P$ nor the inside of $P_{\infty}$. If $q$ lies on one edge of $P$ or $P_{\infty}$, then the great circle containing this edge divide the sphere into two hemisphere and $P$ lies in the closure of one. So in this case $K_P(q)$ is a half plane in $T_q$. If $q$ is also not contained in any edges of $P$ or $P_{\infty}$, then we claim that $K_P(q)\cap K_{P_{\infty}}(q)=0$. But this is not hard to see, since if we extend the minimizing geodesic  from $q$ to a point in $P$, it will not touch $P_{\infty}$ before arriving at the opposite point of $q$. 

Since we always have $K_P(q)=-K_{P_{\infty}}(q)$, this will imply that $K_{P}(q)\neq T_q$ .
\end{proof}

\begin{rem}
\begin{enumerate}
\item It follows from the proof that $q$, which is not one of the vertices, is contained in an edge of $P$ or $P_{\infty}$ if and only $K_P(q)$ equals an half plane in $T_q$. 

\item In the case when $P$ is just a minimizing geodesic segment, $q$ is contained in the interior of $P$ or in the interior of $P_{\infty}$ if and only if $K_P(q)$ is a whole line (not a ray) in $T_q$. 

\item  If $P$ is convex polygon (a minimizing geodesic segment), and $q$ is one of the vertices (end points)of $P_{\infty}$, then we define $K_P(q)$ to be  the convex cone generated by the $r_i(q) $ for those $i$'s none of which is the opposite point of $q$. So in this caee, $K_P(q)$   is contained in but not equal to a half plane of $T_q$, when $P$ is a convex polygon. And if $P$ is a minimizing geodesic segment,  then $K_P(q)$ is just a ray in $T_q$

\end{enumerate}

\end{rem}
Recall we denote by $0$ the point $[1,0]$. Now if $s\in A_0$, let $x_i, 1\leq i\leq n$ be the zeroes of $s$, which are contained in an open hemisphere. Then by the remark above, we consider the rays $r_i(q)$ in $T_0$ which is generated by the tangent vectors at $0$ of the minimizing geodesics from $0$ to those  $x_i$'s which are not $\infty=[0,1]$.  But since the polynomial $f_s$ has no degree $1$ term, which implies that $\sum \frac{1}{x_i}=0$. This equation implies that if  these $x_i$'s  lie on one great circle, then $K_P(0)$ is a whole line in $T_q$, and that if they generate a convex polygon $P$, then $K_P(0)=T_q$. In both cases, $0$ is contained in the interior of $P$ or $P_{\infty}$.

Now we proceed to prove that if $P$ is not a single point, then both $C_{s,P}$ and $C_{s,P_{\infty}}$ are non-empty.

Our idea is to consider the gradient field of $\log |s|^2$. If we consider $s$ as a homogeneous polynomial of degree $n$, then $s$ is a product of linear terms $s=\prod_{i=1}^n (W^{(i)}_0Z_0+W^{(i)}_1Z_1)$. Therefore 
$$|s|^2=\prod \frac{|(W^{(i)}_0Z_0+W^{(i)}_1Z_1|^2}{|Z_0|^2+|Z_1|^2}$$
and $\log |s|^2=\sum_{i=1}^n \log |L_i|^2$, where $L_i=W^{(i)}_0Z_0+W^{(i)}_1Z_1$ is considered as an element in $H^0(\CP^1,\ocal(1))$. And it is easy to observe that the union of the zeroes of the $L_i$'s is the zero set of $s$. We denote by $G(s)$ the gradient field of $\log |s|^2$ , and by $G(L_i)$ the gradient of $ \log |L_i|^2$ , then we have 
$$G(s)=\sum_{i=1}^n G(L_i)$$
The gradient $ G(L_i)$ is very clear to us: Let $x_i$ denote the zero of $L_i$ and $y_i$ the opposite point of $x_i$, then $ G(L_i)$ at $y_i$ is $0$, and at a point $q$ different from $x_i$ and $y_i$,  $ G(L_i)(q)$ is tangent to the geodesic determined by the three points $x_i$,  $q$ and $y_i$, and it points to $y_i$.

\smallskip

Now if $P_{\infty}$ is a minimizing geodesic segment, then at the two end points $G(s)$ is not $0$ and points towards the interior of  $P_{\infty}$. And at a interior point of $P_{\infty}$, $G(s)$, if not $0$, is tangent to $P_{\infty}$. One sees from this observation that the interior of $P_{\infty}$ contains at least one critical point which is a local maximum of  $\log |s|^2$, hence of index $2$ if non-degenerate. Meanwhile $P$ is also a minimizing geodesic segment. Although $G(s)$ is not defined at  the zeroes $x_i$, $G_s$ is tangent to $P$ and points towards $y_i$ in a small neighborhood of each $x_i$. This is because the norm of $G(L_i)$ goes to $\infty$ when getting close to $x_i$. Therefore we conclude that between each pair of adjacent zeroes, there exists at least one critical point of $\log |s|^2$.

\smallskip

What remains to prove is the general situation when $P$ is a convex polygon. We will refer to $G(L_i)$ as the "contribution of $x_i$". On each edge of $P_{\infty}$, since the contributions of the zeroes on the edge are along the edge, we conclude that $G(s)$ is non vanishing at each point on the boundary of $P_{\infty}$ and points towards the interior of $P_{\infty}$. Therefore the interior of $P_{\infty}$ contains at least one critical point which is a local maximum of  $\log |s|^2$, hence of index $2$ if non-degenerate.  

\smallskip

If we furthermore assume that all the critical points  $|s|^2$ are non-degenerate, which by theorem \ref{theo:nondeg} is the general case, we want to prove that $C_{s,P}$ is also non-empty and contained in the interior $P$. For this we need to consider the gradient of $|s|^2$, which vanishes at the zeroes of $s$ and is a positive multiple of $G(s)$ at a general point. So we can choose a smooth Jordan curve $B$ around $P_{\infty}$, which divide $\CP^1$ into two components, such that  the gradient $grad(|s|^2)$ does not vanish on $B$, and points towards the interior of the component containing $P_{\infty}$. With all these conditions, we look at the winding number of $grad(|s|^2)$ along $B$ with the orientation that makes the component containing $P$ the inside. The winding number is $1$ by Poincar\'e theorem, which implies that $|s|^2$ has at least $n-1$ critical points of index $1$ in the interior of $P$. Therefore $C_{s,P}$ is non-empty and contained in the interior of $P$. We have completed the proof of theorem \ref{theo:gauss}.

\end{document}